\newtheorem{thm}{Theorem}[section]
\newtheorem{cor}[thm]{Corollary}
\newtheorem{lm}[thm]{Lemma}
\newtheorem{clm}[thm]{Claim}
\newtheorem*{clm*}{Claim}
\theoremstyle{definition}
\newtheorem{df}[thm]{Definition}
\newtheorem{dfs}[thm]{Definitions}
\theoremstyle{remark}
\numberwithin{equation}{section}
\newcommand{\sprf}{\noindent{\it Proof.}} %used in spf environment
\newcommand{\sqed}{\hfill\rule{1.3mm}{3mm}\medskip}
\newcommand{\cproof}{\noindent{\it Proof of claim.}\ } %used in spf environment
\newcommand{\cqed}{\hfill\rule{1.3mm}{3mm}}
\newcommand{\wec}[1]{{\mathbf{#1}}}  % notation for vectors in algebras
\newcommand{\m}[1]{{\mathbf{\uppercase{#1}}}}
\newcommand{\bd}{\begin{description}}
\newcommand{\ed}{\end{description}}
\newcommand{\lb}{\langle} % <...>
\newcommand{\rb}{\rangle}
\let\phi=\varphi
\begin{document}

\title[Growth Rates of Algebras, II]{Growth Rates of Algebras, II:\\
\vspace{2mm} {\normalsize\rm Wiegold dichotomy}}

\author{Keith A. Kearnes}
\address[Keith Kearnes]{Department of Mathematics\\
University of Colorado\\
Boulder, CO 80309-0395\\
USA}
\email{Keith.Kearnes@Colorado.EDU}
\author{Emil W. Kiss}
\address[Emil W. Kiss]{
Lor\'{a}nd E{\"o}tv{\"o}s University\\
Department of Algebra and Number Theory\\
H--1117 Budapest, P\'{a}zm\'{a}ny P\'{e}ter sétány 1/c.\\
Hungary}
\email{ewkiss@cs.elte.hu}
\author{\'Agnes Szendrei}
\address[\'Agnes Szendrei]{Department of Mathematics\\
University of Colorado\\
Boulder, CO 80309-0395\\
USA}
\email{Agnes.Szendrei@Colorado.EDU}
\thanks{This material is based upon work supported by
the Hungarian National Foundation for Scientific Research (OTKA)
grant no.\ K77409, K83219, and K104251.
}
\subjclass{08A30 (08B05, 08B10)}
\keywords{Growth rate, Wiegold dichotomy, perfect algebra,
basic identity, cube term, parallelogram term, 
maximal subalgebra}

\begin{abstract}
We investigate the function $d_{\m a}(n)$, 
which gives the size of a least size
generating set for $\m a^n$, in the case where 
$\m a$ has a cube term.
We show that if $\m a$
has a $k$-cube term and $\m a^k$ is finitely generated,
then $d_{\m a}(n)\in O(\log(n))$ if $\m a$ is perfect
and  $d_{\m a}(n)\in O(n)$ if $\m a$ is imperfect.
When $\m a$ is finite, then one may replace 
``Big Oh'' with ``Big Theta'' in these estimates.
\end{abstract}

\maketitle

\section{Introduction}\label{intro_sec}
For an algebraic structure $\m a$, write $d_{\m a}(n)=g$
if $g$ is the least size of a generating set
for the direct power $\m a^n$. 
We call the function $d_{\m a}(n)$ the
\emph{growth rate} of $\m a$.
The study of this function originated in group
theory, and some of its history 
is surveyed 
in the preceding paper in this series, \cite{paper1}.
In the present paper we pursue a thread that may also be viewed
as originating in group theory, but is directly 
motivated by some of the results in \cite{paper1}.

%[from part A:]
%A finite perfect group
%has growth rate 
%that is logarithmic ($d_{\m a}(n)\in \Theta(\log(n))$),
%while a finite imperfect group has 
%growth rate that is linear ($d_{\m a}(n)\in \Theta(n)$).
James Wiegold proved in \cite{wiegold1} that 
the growth rate of a finite perfect group 
is logarithmic ($d_{\m a}(n)\in \Theta(\log(n))$),
%lies in $\Theta(\log(n))$, 
and that 
the growth rate of a finite imperfect group 
is linear ($d_{\m a}(n)\in \Theta(n)$).
%lies in $\Theta(n)$. 
This result, herein called
\emph{Wiegold dichotomy}, was extended 
by Martyn Quick and Nik Ru\v skuc in \cite{quick_ruskuc}
to several kinds of
%algebras that realize the identities defining groups
%(i.e., algebras having underlying group structure).
algebras that have underlying group structure.
Namely, Quick and Ru\v skuc 
showed that a finite algebra $\m a$ satisfies 
$d_{\m a}(n)\in \Theta(\log(n))$ if $\m a$
is a perfect %group, 
ring, module, 
Lie algebra or $k$-algebra over a field $k$, and that 
$d_{\m a}(n)\in \Theta(n)$ if $\m a$
is an imperfect algebra of one of these types.

%To understand the motivating results and questions, 
To put these results in a broader context,
call a term $t$ \emph{basic}
if it is a variable, a constant, 
or a function
symbol applied to variables and constants.
Call an identity $s\approx t$ basic if both
$s$ and $t$ are. Say that a set $\Sigma$ of 
identities is \emph{realized} 
in an algebra $\m a$ 
%(or ``can be realized'' in $\m a$) 
if it is possible to interpret each function
symbol appearing in $\Sigma$ as a term of $\m a$
and each constant as an element of $\m a$
so that all identities in $\Sigma$
are satisfied by $\m a$. 
For example, every algebra $\m a$ that has underlying group structure
realizes the (basic) identities
\begin{equation}
\label{maltsevForGroups}
F(x,y,y)\approx x
\qquad\text{and}\qquad
F(y,y,x)\approx x,
\end{equation}
because these identities hold in $\m a$ for the 
group term $F(x_1,x_2,x_3)=x_1x_2^{-1}x_3$. 
A term for which the identities (\ref{maltsevForGroups}) hold in
$\m a$ is called a \emph{Maltsev term} for $\m a$.

Our paper \cite{paper1} asks the question:
Which sets $\Sigma$ of basic identities
impose a restriction on growth rates of algebras?
Phrased differently: For which sets $\Sigma$ is there
an algebra $\m a$ such that its growth rate $d_{\m a}(n)$
does not occur as the growth rate
of any algebra realizing $\Sigma$?
%Roughly speaking, t
The answer is: exactly those
$\Sigma$ which entail the existence of a pointed
cube term. A \emph{pointed cube term} is a term
$F(x_1,\ldots,x_m)$ with respect to which $\m a$ 
satisfies an array of identities of the form
\begin{equation}\label{array}
\begin{array}{rl}
F(\wec{y}_1)&\approx x,\\
\vdots\\
F(\wec{y}_k)&\approx x,
\end{array}
\end{equation}
where each of the elements of each tuple $\wec{y}_i$
is a variable or an element of $\m a$, and a further condition
is satisfied. The condition is that, when (\ref{array})
is written as a matrix equation, $F(M) = \wec{x}$, with 
\[
M = \left(
\begin{array}{c}
\wec{y}_1\\
\vdots\\
\wec{y}_k\\
\end{array}
\right)
\quad
\textrm{and}\quad
\wec{x} = \left(
\begin{array}{c}
x\\
\vdots\\
x\\
\end{array}
\right),
\]
then each \emph{column} of $M$ contains a symbol
(a variable or constant) that is different from $x$.
The term $F$ is a 
\emph{$p$-pointed, $k$-cube term}
if the matrix $M$ contains $p$ distinct elements
of $\m a$ and $k$ rows.
Here are 
three 
%two
basic examples: a binary term 
$F(x_1,x_2)$ for which some element $1\in A$
is a left and right unit element is a $1$-pointed,
$2$-cube term for $\m a$, 
since $\m a$ satisfies the row equations of
\[
F\left(
\begin{array}{rl}
1&x\\
x&1
\end{array}
\right)\approx 
\left(
\begin{array}{c}
x\\
x
\end{array}
\right).
\]
A %ternary 
Maltsev term $F(x_1,x_2,x_3)$ for $\m a$ is
a $0$-pointed, $2$-cube term for $\m a$, %since
since 
%$\m a$ satisfies the row equations of
the identities in (\ref{maltsevForGroups})
can be rewritten as the row equations of
\[
F\left(
\begin{array}{ccc}
x&y&y\\
y&y&x
\end{array}
\right)\approx 
\left(
\begin{array}{c}
x\\
x
\end{array}
\right).
\]
A 
%ternary 
\emph{majority term} for $\m a$ is
a $0$-pointed, $3$-cube term 
$F(x_1,x_2,x_3)$ for which 
%for $\m a$ %since 
%%%since
%such that 
$\m a$ satisfies the row equations of
\[
F\left(
\begin{array}{ccc}
x&x&y\\
x&y&x\\
y&x&x
\end{array}
\right)\approx 
\left(
\begin{array}{c}
x\\
x\\
x
\end{array}
\right).
\]

We prove in \cite{paper1}
that if $\Sigma$ is a set of basic identities
which entails no pointed cube term, then for any
algebra $\m a$ there is an algebra $\m b$
that realizes $\Sigma$ and has the same growth
rate as $\m a$. Thus the realization of $\Sigma$
imposes no restriction on growth rates. On the other hand, 
if $\Sigma$ entails a $p$-pointed, $k$-cube term
and $\m a$ is a (possibly infinite) algebra
for which 
%$\m a^{p+k}$
$\m a^{p-1+k}$ (if $p>0$) or $\m a^k$ (if $p=0$)
is finitely generated,
then $d_{\m a}(n)$ is bounded above by a polynomial
function of $n$. This is a restriction.

In the current paper we use different techniques
to establish stronger results for algebras
with $0$-pointed cube terms, 
namely we establish that Wiegold
dichotomy holds for such algebras.
We show that if $\m a$ has a
$0$-pointed, $k$-cube term and $\m a^k$ is
finitely generated, then 
$d_{\m a}(n)=O(\log(n))$ if $\m a$ is perfect
and 
$d_{\m a}(n)=O(n)$ if $\m a$ is imperfect.
(``Big Oh'' can be strengthened to ``Big Theta''
when $\m a$ is finite.)
In this statement the word ``perfect'' is used
with respect to the modular commutator (see \cite{freese-mckenzie}),
namely an algebra is perfect if it has no nontrivial abelian
homomorphic image.

Our approach will be through an analysis
of maximal subalgebras of powers of $\m a$.
$0$-pointed cube terms were discovered
and investigated first in \cite{bimmvw}, while 
an equivalent type of term was discovered independently
and investigated in \cite{parallelogram}.
It is the results of the latter paper that are applicable
to the analysis of maximal subalgebras of powers.
%Our analysis could be used to produce
%counting formulas for the number of such subalgebras,
%similar to the results of 
%Jacques Th\'evenaz in \cite{thevenaz}
%for algebras with a Maltsev term, but we have not pursued this.

\section{Preliminaries}\label{prelim_sec}

$[n]$ denotes the set $\{1,\ldots,n\}$.
A tuple in $A^n$ may be denoted 
$(a_1,\ldots,a_n)$ or $\wec{a}$.
A tuple $(a,a,\ldots,a)\in A^n$ with all coordinates equal to $a$
may be denoted $\hat{a}$.
The size of a set $A$, the 
length of a tuple $\wec{a}$, and the length of a string $\sigma$
are denoted $|A|$, $|\wec{a}|$ and $|\sigma|$.
Structures are denoted in bold face font,
e.g. $\m a$, while the universe of a structure
is denoted by the same character 
in italic font, e.g., $A$. 
The subuniverse of $\m a$ generated by a subset $G\subseteq A$
is denoted $\lb G\rb$.

We will use Big Oh notation.
If $f$ and $g$ are real-valued functions defined
on some subset of the real numbers, then 
$f\in O(g)$ and $f=O(g)$ both mean that there are 
positive constants
$M$ and $N$ such that $|f(x)|\leq M|g(x)|$ for all $x>N$.
We write $f\in \Omega(g)$ and $f=\Omega(g)$ to mean that there are 
positive constants
$M$ and $N$ such that $|f(x)|\geq M|g(x)|$ for all $x>N$.
Finally, $f\in \Theta(g)$ and $f=\Theta(g)$ mean that both
$f\in O(g)$ and $f\in \Omega(g)$ hold.

Our focus in this paper is on obtaining good
upper bounds for $d_{\m a}(n)$ 
%when $\m a$ is arbitrary. 
whether $\m a$ is finite or infinite.
When $\m a$ is finite,
the upper bounds we obtain are asymptotically
equal to the easily-proved lower bounds
mentioned here:

\begin{thm}\label{first_bounds}
If $\m a$ is a finite algebra of
more than one element, then 
\begin{enumerate}
\item[(1)]
$d_{\m a}(n)\in\Omega(\log(n))$. 
\item[(2)]
$d_{\m a}(n)\in\Omega(n)$ if $\m a$ is imperfect.
\end{enumerate}
\end{thm}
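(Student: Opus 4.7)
The plan is to prove each part by a direct counting argument; part~(2) additionally uses the structure theory of abelian algebras.

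For part~(1), since $|A|\geq 2$ the $n$ coordinate projections $\pi_1,\ldots,\pi_n\colon\m a^n\to\m a$ are pairwise distinct homomorphisms, so $|\operatorname{Hom}(\m a^n,\m a)|\geq n$. On the other hand, any homomorphism out of $\m a^n$ is determined by its values on any generating set, so $|\operatorname{Hom}(\m a^n,\m a)|\leq |A|^{d_{\m a}(n)}$. Combining these gives $n\leq |A|^{d_{\m a}(n)}$, hence $d_{\m a}(n)\geq\log_{|A|}(n)\in\Omega(\log n)$.

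For part~(2), I first reduce to the abelian case. Imperfectness of $\m a$ provides a surjection $\m a\twoheadrightarrow\m b$ onto a nontrivial finite abelian algebra~$\m b$; since generating sets push forward under surjections (applied coordinatewise to $\m a^n\twoheadrightarrow\m b^n$) we have $d_{\m b}(n)\leq d_{\m a}(n)$, so it suffices to show $d_{\m b}(n)\in\Omega(n)$. Using the classical structure theorem for finite abelian algebras (a consequence of commutator/tame-congruence theory), one extracts from $\m b$ a nontrivial quotient polynomially equivalent to a finite-dimensional vector space $V$ over a finite field $\mathbb{F}_q$. Because quotients only decrease the growth rate further, we may replace $\m b$ by this quotient. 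Every term operation of $\m b^n$ is then an affine operation of the vector space $V^n$, so the subalgebra generated by $d$ elements lies inside their affine $\mathbb{F}_q$-span, which has $\mathbb{F}_q$-dimension at most $d-1$. For this subalgebra to fill all of $V^n$ (of $\mathbb{F}_q$-dimension $n\cdot\dim_{\mathbb{F}_q}V\geq n$), one needs $d\geq n+1$, whence $d_{\m b}(n)\in\Omega(n)$.

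The main obstacle is the structural step in~(2): identifying inside a nontrivial finite abelian algebra a quotient polynomially equivalent to a finite module is a nontrivial invocation of commutator theory and is the one place where the argument is not purely combinatorial. Once that reduction is made, the affine-span estimate is elementary; part~(1) is a pure counting exercise.
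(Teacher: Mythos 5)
Your argument for part~(1) is correct: the $n$ coordinate projections give at least $n$ distinct homomorphisms $\m a^n\to\m a$, and a homomorphism out of $\m a^n$ is determined by its restriction to a generating set, so $n\leq|A|^{d_{\m a}(n)}$ and $d_{\m a}(n)\geq\log_{|A|}n$. (The paper itself only cites an earlier manuscript for this, so there is no internal proof to compare against, but this counting argument is standard and sound.)

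Part~(2) has a genuine gap. You reduce correctly to a nontrivial finite abelian quotient $\m b$ of $\m a$, but the structural step --- ``one extracts from $\m b$ a nontrivial quotient polynomially equivalent to a finite-dimensional vector space over a finite field'' --- is false in the generality needed here. The Fundamental Theorem of Abelian Algebras (an abelian algebra is affine, i.e., polynomially equivalent to a module) requires that the algebra lie in a congruence modular variety, and Theorem~\ref{first_bounds} carries no such hypothesis. Without congruence modularity there are finite abelian algebras with no affine quotient at all: the simplest example is a two-element set with no operations. It is abelian (the term condition is vacuous since the only terms are variables), its only nontrivial quotient is itself, and it is not polynomially equivalent to $\BZ/2\BZ$ because it lacks a Maltsev polynomial. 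More generally, finite abelian algebras can have prime quotients of tame-congruence-theoretic type~$\mathbf 1$ (unary/permutational) rather than type~$\mathbf 2$ (affine), and the type-$\mathbf 1$ case is not covered by your affine-span estimate. The theorem is still true in those cases --- indeed for a pure $m$-set $d_{\m b}(n)=m^n$, which is far larger than linear --- but establishing $\Omega(n)$ for an arbitrary nontrivial finite abelian algebra requires an argument that does not pass through an affine representation, for instance a counting argument (e.g.\ lower-bounding the number of homomorphisms or maximal subuniverses of $\m b^n$ by an exponential in $n$ and comparing with the $|B|^{d_{\m b}(n)}$ upper bound as in your part~(1)). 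As written, the proof of~(2) only treats the case where an affine quotient exists and therefore does not prove the stated result.
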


\begin{proof}
Item (1) is proved in Theorem~2.2.2 of \cite{paper1}.
Item (2) follows from the combination of 
Corollary~2.2.5~(2) and Theorem~2.2.1~(2) of \cite{paper1}.
\end{proof}

We need one preliminary result for the
case when $\m a$ is infinite.

\begin{thm}\label{pointed_polynomial_cor}
If $\m a^k$ is a finitely generated algebra with a $0$-pointed
or $1$-pointed, $k$-cube term, then $d_{\m a}(n)\in O(n^{k-1})$. 
\end{thm}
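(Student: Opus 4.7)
The plan is to lift a finite generating set of $\m a^k$ to a generating set of $\m a^n$ of size $O(n^{k-1})$ using the $k$-cube term, thereby sharpening the polynomial bound of \cite{paper1} to the precise exponent $k-1$.

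Let $F(x_1,\ldots,x_m)$ be the given $p$-pointed $k$-cube term, with defining matrix $M$ of $k$ rows, and let $G=\{g^{(1)},\ldots,g^{(t)}\}\subseteq A^k$ be a fixed finite generating set of $\m a^k$. Pick $c\in A$ to be the pointed constant if $p=1$, or an arbitrary element if $p=0$; substituting $c$ for all variables (and for $x$) in each row identity $F(\wec{y}_i)\approx x$ yields $F(c,\ldots,c)=c$ in either case. Now for each $(k-1)$-element subset $T=\{t_1<\cdots<t_{k-1}\}\subseteq\{2,\ldots,n\}$ and each $g=(g_1,\ldots,g_k)\in G$, define $\widetilde{g}_T\in A^n$ by placing $g_1$ at coordinate $1$, $g_{i+1}$ at coordinate $t_i$, and $c$ everywhere else. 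Let $H$ be the union of all such $\widetilde{g}_T$ together with a finite generating set of the diagonal subalgebra of $\m a^n$ (which exists because $\m a$, as a quotient of $\m a^k$, is finitely generated). Then $|H|\le |G|\binom{n-1}{k-1}+O(1)=O(n^{k-1})$.

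The verification amounts to showing $\lb H\rb=A^n$. By construction the projection of $H$ onto coordinates $\{1\}\cup T$ contains $G$, hence generates $A^k$. Applying the matrix identity $F(M)=\hat{x}$ coordinate-wise to $m$-tuples of elements of $\lb H\rb$ lets us produce any desired value at any designated coordinate in $\{1\}\cup T$, while preserving any other coordinate at which all $m$ inputs are equal to $c$ (since $F$ returns $c$ there). An induction on the number of coordinates where a target $\wec{a}\in A^n$ differs from $\hat c$, at each step choosing a $T$ containing the next coordinate to be corrected, then produces $\wec{a}$ as a term in $H$.

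The main obstacle is the simultaneous bookkeeping in this induction: a single cube-term application must correct one coordinate while leaving already-matched coordinates untouched. In the $p=1$ case the pointed constant is a genuine term constant of $\m a$, which makes the propagation of $c$ through the matrix automatic. In the $p=0$ case, where $c$ is just an ordinary element of $A$, one needs the structural results of \cite{parallelogram} on subpowers of algebras with a parallelogram (equivalently, cube) term: the parallelogram property implies that subalgebras of $\m a^n$ containing $\hat c$ and having full projection onto every $(\{1\}\cup T)$-coordinate subset exhaust $\m a^n$, which is precisely what our $H$ supplies.
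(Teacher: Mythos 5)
Your proposal attempts a self-contained construction, whereas the paper simply cites Corollary~5.2.4 of \cite{paper1} (whose argument, judging from \cite{bimmvw}, proceeds via compact representations/signatures rather than the ``fill with $c$'' idea). Unfortunately your argument has genuine gaps that I do not see how to close along the lines you sketch.

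The lemma you invoke at the end to cover the $p=0$ case is false as stated. Take $\m a = \BZ_2$ as a group (so $k=2$, Maltsev), $n=3$, and $\m s = \{(a,b,b) : a,b\in\BZ_2\}\leq \BZ_2^3$. Then $\m s$ contains the diagonal (hence every $\hat c$), projects onto $A^{\{1,2\}}$ and onto $A^{\{1,3\}}$, and being a subgroup of a Maltsev algebra it has the parallelogram property. Yet $\m s\neq\BZ_2^3$. So ``contains $\hat c$ and has full projection onto every $\{1\}\cup T$'' does not force $\m s = \m a^n$. (Your particular $H$ is not contained in this $\m s$, so this does not by itself refute your construction --- but it does show the justification you give is broken, and the burden is back on the induction.)

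The induction you sketch does not bottom out. The cube-term step works by assigning a row $i(p)\in[k]$ of the defining matrix $M$ to each coordinate $p$ where $\wec a$ differs from $\hat c$; for each argument slot $j$ one then forms $\wec z_j$ whose $p$-th entry is $a_p$ if $M_{i(p),j}=x$ and $c$ otherwise. Each $\wec z_j$ has strictly smaller support than $\wec a$ \emph{only if} every column $j$ has a non-$x$ entry in some row hit by the assignment, and since a priori a column may have its unique non-$x$ entry in any single row, this requires the assignment to be surjective onto $[k]$, hence requires $\wec a$ to differ from $\hat c$ in at least $k$ coordinates. (Concretely, for the Maltsev matrix and a tuple with a single off-$c$ coordinate, the third argument $\wec z_3$ is forced to equal $\wec a$ itself, so the ``induction'' is circular.) The base cases with support of size $<k$ therefore need a separate argument, and the natural one --- that $\lb\{\widetilde g_T : g\in G\}\rb$ contains the whole ``$c$-axis'' $\{\wec s : \wec s|_{[n]\setminus(\{1\}\cup T)}=\hat c\}$ --- requires $t(c,\ldots,c)=c$ for \emph{every} term $t$, i.e.\ that $c$ be an idempotent of $\m a$. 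That is not given: for a $0$-pointed cube term $c$ is arbitrary, and even for a $1$-pointed one the identity $F(c,\ldots,c)=c$ says only that the single term $F$ fixes $c$, not that all of them do. Your construction and count $\binom{n-1}{k-1}|G|+O(1)=O(n^{k-1})$ are fine, and the argument does go through when $c$ is an idempotent (e.g.\ groups with $c=e$), but in general the verification that $\lb H\rb=A^n$ is missing.
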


\begin{proof}
This is Corollary 5.2.4 of \cite{paper1}.
\end{proof}

In particular, if $\m a$ has a $0$-pointed, 
$k$-cube term and $\m a^k$ is finitely
generated, then all finite powers
of $\m a$ are finitely generated.

Theorem~\ref{pointed_polynomial_cor}
implies that if $\m a$ has a Maltsev term
(i.e., a $0$-pointed, $2$-cube term)
and $\m a^2$ is finitely generated, then 
$d_{\m a}(n)\in O(n)$. 
We will apply this fact 
when $\m a$ is an \emph{affine} algebra (i.e., an abelian
algebra with a Maltsev term).

\section{Maximal subuniverses of powers}\label{max_sec}

In this section we relate arbitrary maximal subuniverses
of $\m a^n$ to critical maximal subuniverses. 
The results of this section 
require no assumptions on $\m a$.

\begin{df}
\label{df-induced}
If $R$ is a subuniverse of an algebra $\m b$ and $\phi\colon \m b\to\m c$
is a surjective homomorphism such that $R=\phi^{-1}(\phi(R))$, we will say 
that $R$ \emph{induced by the homomorphism} $\phi$.
\end{df}

\begin{lm}
\label{lm-induced}
If $M$ is a maximal subuniverse of $\m a^n$ and $\phi\colon \m a^n\to\m c$
is a surjective homomorphism such that $\phi(M)\not=C$, then
$\phi(M)$ is a maximal subuniverse of $\m c$ and $M$ is induced by $\phi$.
\end{lm}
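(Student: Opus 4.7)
The plan is a short order-theoretic argument exploiting the Galois-connection-like behavior of $\phi^{-1}$ and $\phi$ on subuniverses. First I would record the two generalities I need: (i) the direct image $\phi(M)$ is a subuniverse of $\m c$ because $\phi$ is a homomorphism, and (ii) for any subuniverse $N$ of $\m c$, the preimage $\phi^{-1}(N)$ is a subuniverse of $\m a^n$. Because $\phi$ is assumed surjective, we also have the identity $\phi(\phi^{-1}(N))=N$, while $\phi^{-1}(\phi(S))\supseteq S$ holds for any $S\subseteq A^n$.

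Next I would establish that $M$ is induced by $\phi$. Set $M':=\phi^{-1}(\phi(M))$. Then $M'$ is a subuniverse of $\m a^n$ with $M\subseteq M'$. If $M'=A^n$, applying $\phi$ (using surjectivity) would give $\phi(M)=\phi(A^n)=C$, contradicting the hypothesis that $\phi(M)\neq C$. Hence $M'\subsetneq A^n$, and maximality of $M$ forces $M'=M$, i.e.\ $M=\phi^{-1}(\phi(M))$. This is exactly the condition of Definition~\ref{df-induced}.

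Finally I would verify that $\phi(M)$ is maximal in $\m c$. Let $N$ be any subuniverse of $\m c$ with $\phi(M)\subseteq N\subseteq C$. Then $\phi^{-1}(N)$ is a subuniverse of $\m a^n$ satisfying
\[
M \;=\; \phi^{-1}(\phi(M)) \;\subseteq\; \phi^{-1}(N) \;\subseteq\; A^n.
\]
By maximality of $M$, either $\phi^{-1}(N)=M$ or $\phi^{-1}(N)=A^n$. Applying $\phi$ and using surjectivity, these alternatives give $N=\phi(M)$ or $N=C$ respectively, proving maximality.

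There is no real obstacle here; the only point demanding care is to use surjectivity of $\phi$ at the right moments, namely to pass from $\phi^{-1}(N)=A^n$ to $N=C$ and from $M'=A^n$ to $\phi(M)=C$. Everything else is formal manipulation of images and preimages together with the maximality hypothesis on $M$.
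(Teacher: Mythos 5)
Your proof is correct and follows essentially the same order-theoretic argument as the paper's: both hinge on the facts that $\phi(\phi^{-1}(N))=N$ for surjective $\phi$, that $M\subseteq\phi^{-1}(\phi(M))$, and that maximality of $M$ pins down $\phi^{-1}(\phi(M))$ and the preimage of any intermediate subuniverse. The paper merely compresses the two steps into a single chain $M\subseteq\phi^{-1}(\phi(M))\subseteq\phi^{-1}(S)\subsetneq A^n$ for an arbitrary proper $S\supseteq\phi(M)$, whereas you separate the ``induced'' claim from the maximality claim; the substance is identical.
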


\begin{proof}
If $S$ is a proper subuniverse of $\m c$ containing $\phi(M)$,
then $M\subseteq\phi^{-1}(\phi(M))\subseteq\phi^{-1}(S)\subsetneq A^n$,
where $\subsetneq$ holds, because 
$\phi(\phi^{-1}(S))=S\subsetneq C=\phi(A^n)$.
Hence the maximality of $M$ forces that $M=\phi^{-1}(\phi(M))$
and $M=\phi^{-1}(S)$. 
The first equality proves that $M$ is induced by $\phi$,
while the second equality implies that
$\phi(M)=\phi(\phi^{-1}(S))=S$, so $\phi(M)$ is a maximal
subuniverse of $\m c$. 
\end{proof}

\begin{dfs} 
\text{\cite{parallelogram}}
A \emph{compatible $n$-ary relation} of $\m a$
is a subuniverse of $\m a^n$.

A compatible relation $R$ is \emph{critical} if it is completely
$\cap$-irreducible in the subuniverse lattice of $\m a^n$ and
directly indecomposable as a relation. 
(The latter means that $R$ is not of the form 
$S\times T$ for subsets $S\subseteq A^U$
and $T\subseteq A^V$, where $\{U,V\}$ is a partition
of $[n]$ into two cells.)
\end{dfs}

Any maximal subuniverse $M$ of $\m a^n$ is 
completely $\cap$-irreducible in the subuniverse
lattice of $\m a^n$, so a
critical maximal subuniverse of $\m a^n$
is just a maximal subuniverse
that is directly indecomposable as a relation.
%These can be described in terms of the following
%notion.
%To express the relationship between arbitrary
%and critical maximal subuniverses we introduce ...

\begin{df}
If $M$ is a subuniverse of $\m a^n$, then a 
\emph{support} of $M$ is a subset $U\subseteq [n]$
%that is minimal under inclusion for the property
such that $\pi_U(M)\neq A^U$,
where $\pi_U\colon \m a^n\to \m a^U$ is the projection homomorphism.
\end{df}

\begin{lm}
\label{lm-minsupp}
If $M$ is a maximal subuniverse of $\m a^n$, then $M$
has a unique minimal support. If $U$ is the 
minimal support of $M$,
then $M_U:=\pi_U(M)$ is a critical maximal subuniverse
of $\m a^U$, $M=M_U\times A^{U'}$ for $U'=[n]\setminus U$, and 
$M$ is induced by the projection $\pi_U$.
In particular, $M$ itself
is critical if and only if its 
unique support is $[n]$.
\end{lm}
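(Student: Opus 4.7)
The plan is to leverage Lemma~\ref{lm-induced}: for any support $U$ of $M$ the projection $\pi_U\colon \m a^n\to\m a^U$ is surjective, and $\pi_U(M)\neq A^U$ by definition of support, so the lemma yields that $\pi_U(M)$ is a maximal subuniverse of $\m a^U$ and that $M = \pi_U^{-1}(\pi_U(M)) = \pi_U(M)\times A^{[n]\setminus U}$. Hence once the existence and uniqueness of a minimal support $U$ are established, three of the stated conclusions---maximality of $M_U=\pi_U(M)$, the factorization $M=M_U\times A^{U'}$, and the inducement of $M$ by $\pi_U$---all follow immediately from this single application of Lemma~\ref{lm-induced}.

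The heart of the proof is to show that the family $\mathcal{F}$ of supports of $M$ is closed under intersection; since $\mathcal{F}$ is finite and contains $[n]$ (because $M\neq A^n$), this yields a unique minimum. I expect the main technical step to be a splicing argument. Given supports $U_1,U_2$ and tuples $\wec{a},\wec{b}\in A^n$ that agree on $U_1\cap U_2$, form $\wec{c}$ by taking coordinates from $\wec{a}$ on $U_1$ and from $\wec{b}$ off $U_1$; then $\wec{c}$ agrees with $\wec{a}$ on $U_1$ and, using the agreement on $U_1\cap U_2$, with $\wec{b}$ on $U_2$, so the two factorizations $M=M_{U_i}\times A^{[n]\setminus U_i}$ yield $\wec{a}\in M\Leftrightarrow\wec{c}\in M\Leftrightarrow\wec{b}\in M$. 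Thus membership in $M$ depends only on coordinates in $U_1\cap U_2$, giving $M=\pi_{U_1\cap U_2}^{-1}(\pi_{U_1\cap U_2}(M))$, and $M\neq A^n$ then forces $\pi_{U_1\cap U_2}(M)\neq A^{U_1\cap U_2}$, so $U_1\cap U_2\in\mathcal{F}$.

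With $U$ the minimum of $\mathcal{F}$ in hand, it remains to check that $M_U$ is critical and to verify the ``in particular'' statement. If $M_U=S\times T$ were a nontrivial product across a partition $\{V_1,V_2\}$ of $U$ with $S\subseteq A^{V_1}$, $T\subseteq A^{V_2}$, then maximality of $M_U$ forces one factor, say $S$, to be proper; but then $\pi_{V_1}(M)=\pi_{V_1}(M_U)=S\neq A^{V_1}$, making $V_1\subsetneq U$ a support of $M$ and contradicting minimality. Hence $M_U$ is directly indecomposable, and combined with the observation (recorded right before the lemma) that every maximal subuniverse is completely $\cap$-irreducible, $M_U$ is critical. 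Finally, if $U=[n]$ then $M=M_U$ is itself critical; conversely, if $U\subsetneq [n]$, the factorization $M=M_U\times A^{U'}$ across the nontrivial partition $\{U,U'\}$ of $[n]$ directly exhibits $M$ as decomposable, hence not critical.
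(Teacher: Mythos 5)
Your proof is correct, and at its core it uses the same tools as the paper: Lemma~\ref{lm-induced} applied to $\pi_U$ to get maximality, the factorization, and the inducement in one stroke, plus a splicing argument for uniqueness and the same projection-based contradiction for criticality. The one place you genuinely deviate is the uniqueness argument. The paper assumes two incomparable minimal supports $U$ and $V$, picks a specific tuple $f\notin M_U$, invokes the minimality of $V$ to match $f$ on $U\cap V$ with some $g\in M$, and splices to get a contradiction. You instead prove the stronger structural fact that the family of supports is closed under intersection, by splicing two arbitrary tuples that agree on $U_1\cap U_2$; uniqueness then falls out from finiteness of $2^{[n]}$ and the fact that $[n]$ is a support. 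Your version is cleaner in that it works with arbitrary supports (no minimality hypothesis needed in the splicing step), and it makes explicit that the supports of a maximal subuniverse form a principal filter. It is worth noting (and your argument handles this implicitly) that if two supports had empty intersection, the closure conclusion would force $M\in\{\emptyset,A^n\}$, so for a proper nonempty $M$ the intersection of any two supports is automatically nonempty. Everything else --- the criticality of $M_U$ via projecting a proper factor, and the ``in particular'' equivalence --- matches the paper's argument.
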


\begin{proof}
For any set $U\subseteq[n]$ let $M_U:=\pi_U(M)$. 
If $U$ is a support of $M$, then
Lemma~\ref{lm-induced} applied to $\pi_U$ yields that 
$M_U$ is a maximal subuniverse of $\m a^U$,
$M$ is induced by $\pi_U$, and
hence $M=\pi_U^{-1}(M_U)=M_U\times A^{U'}$ for $U'=[n]\setminus U$.

Now assume that $U, V\subseteq [n]$ are distinct minimal supports
of the maximal subuniverse $M\leq \m a^n$.
$U$ and $V$ must be incomparable under inclusion.
%Let $M_U=\pi_U(M)$ and $M_V=\pi_V(M)$.
We shall view elements of $\m a^n$ as functions
from $[n]$ to $A$. In this language, 
$M$ is a proper subset of the set of all functions,
$M_U$ is the set of restrictions to $U$ of the functions
in $M$, and $M$ contains all functions whose restriction to
$U$ belongs to $M_U$. Similarly,
$M_V$ is the set of restrictions to $V$ of the functions
in $M$, and $M$ contains all functions whose restriction to
$V$ belongs to $M_V$.
Since $M_U\neq A^U$, there is a function $f\colon U\to A$ 
that is not in $M_U$. Since $V$ is a minimal support
and $U\cap V$ is properly contained in $V$, it follows that every 
function $U\cap V\to A$ is the restriction
of some function in $M$. In particular, 
$f|_{U\cap V}=g|_{U\cap V}$ for some $g\in M$.
Let $h\in A^n$ be any function that agrees with
$f$ on $U$ and $g$ on $V$. 
Then $h|_U = f\notin M_U$, so $h\notin M$.
Yet $h|_V = g|_V\in M_V$, so $h\in M$, a contradiction.
This shows that $M$ has a unique minimal support.

Let $U$ be the  minimal support of $M$.
The second statement of the lemma,
except for the criticality of $M_U$,
follows from the first paragraph
of this proof.
To show that $M_U$ is a critical, assume that $M_U = S\times T$, where
$S\leq \m a^X$ and $T\leq \m a^Y$ for some partition
$\{X,Y\}$ of $U$. Since $M_U\not=A^U$,
either 
$A^X\neq S=\pi^{\m a^U}_X(M_U) = \pi^{\m a^n}_X(M)$, or 
$A^Y\neq T=\pi^{\m a^U}_Y(M_U) = \pi^{\m a^n}_Y(M)$.
Either way, one obtains that $X$ or $Y$ is a proper
subset of $U$ that is a support of $M$, contradicting
the minimality of $U$.

For the final statement of the lemma, if the minimal support of 
$M$ is $[n]$, then $\pi_{[n]}(M)=M$ is critical by the second
statement of the lemma.
Conversely, assume that $M$ is critical and $U\subseteq [n]$
is its minimal support. Since $M = M_U\times A^{U'}$
and $M$ is directly indecomposable
as a relation, we get $U' = \emptyset$, equivalently $[n]=U$.
\end{proof}

\section{The parallelogram property for critical relations}\label{parallel}

In the preceding section we showed that all maximal subuniverses
of $\m a^n$ are induced by critical maximal subuniverses
on projections $\m a^U$ of $\m a^n$. In this section
we show that the critical maximal subuniverses of $\m a^U$
have a special structure when $\m a$ has a $0$-pointed,
$k$-cube term.

\begin{df} 
\text{\cite{parallelogram}}
Given a partition $\{S, T\}$ of $[n]$ into two cells,
write $\wec{x}\wec{y}$ for a tuple in $A^n$ to mean that 
$\wec{x}\in A^S$ and $\wec{y}\in A^T$.
A compatible $n$-ary relation $R$ satisfies the 
\emph{parallelogram property} if, for any partition
$\{S, T\}$ of $[n]$, 
$\wec{a}\wec{u}, \wec{a}\wec{v}, \wec{b}\wec{v}\in R$
implies $\wec{b}\wec{u}\in R$.
\end{df}

Theorem~3.5 and Theorem~3.6~(3) of \cite{parallelogram} together prove the
following theorem.

\begin{thm}
A variety $\mathcal V$ has a $0$-pointed, $k$-cube term
if and only if every member $\m a\in \mathcal V$
has the property that any critical relation of $\m a$
of arity at least $k$ has the parallelogram property.
\end{thm}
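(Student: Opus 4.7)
My plan is to prove the two directions separately. The forward direction ($\Rightarrow$) uses the cube identities together with criticality to force the parallelogram property, while the backward direction ($\Leftarrow$) proceeds contrapositively by extracting a critical obstruction from a free algebra of $\mathcal V$.

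For $(\Rightarrow)$, let $F(x_1,\ldots,x_m)$ be a $0$-pointed $k$-cube term with $k\times m$ matrix $M$, and let $R\leq\m a^n$ be a critical compatible relation with $n\geq k$. Given a partition $\{S,T\}$ of $[n]$ and tuples $\wec{a}\wec{u},\wec{a}\wec{v},\wec{b}\wec{v}\in R$, the goal is $\wec{b}\wec{u}\in R$. I would introduce the \emph{parallelogram closure}
\[
\bar R = \{\wec{x}\wec{y} : \text{there exist } \wec{c},\wec{w} \text{ with } \wec{x}\wec{w},\,\wec{c}\wec{w},\,\wec{c}\wec{y}\in R\},
\]
observe that $\bar R$ is a subuniverse of $\m a^n$ containing $R$ (any term operation applied coordinate-wise to a family of witnesses produces a family of witnesses), and then argue $\bar R=R$ using the cube term. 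If parallelogram fails then $\bar R\supsetneq R$; by complete $\cap$-irreducibility, the unique cover $R^+$ of $R$ satisfies $R^+\subseteq\bar R$. The cube identities $F(M)\approx\hat{x}$ are to be exploited coordinate-wise: assign to each coordinate $p\in[n]$ one of the $k$ rows $i(p)$ of $M$ (possible since $n\geq k$), and for each column $j\in[m]$ build a tuple $\wec{r}_j$ whose $p$-th entry matches $M_{i(p),j}$ with the variables $x,y$ translated into appropriate components of $\wec{a},\wec{b},\wec{u},\wec{v}$. The three given tuples realize some of the $\wec{r}_j$ directly, while the remaining columns demand ``mixed'' tuples lying in $\bar R\supseteq R^+$; applying $F$ coordinate-wise collapses these to $\wec{b}\wec{u}$, and the direct indecomposability of $R$ is invoked to force the output into $R$ rather than merely into $R^+$.

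For $(\Leftarrow)$, assume $\mathcal V$ has no $0$-pointed $k$-cube term. Let $\m F=\m F_{\mathcal V}(x,y)$, and let $\rho\leq\m F^k$ be the subuniverse generated by the $2^k-1$ tuples in $\{x,y\}^k\setminus\{\hat{x}\}$. Nonexistence of a $k$-cube term is equivalent to $\hat{x}\notin\rho$. By Zorn's lemma, extend $\rho$ to a maximal subuniverse $M\leq\m F^k$ with $\hat{x}\notin M$. By \lmref{lm-minsupp}, $M$ has a unique minimal support $U\subseteq[k]$ and $M_U=\pi_U(M)$ is critical of arity $|U|$. A short check using the ``single-$y$'' generators of $\rho$ forces $|U|\geq 2$. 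To witness failure of the parallelogram property on $M_U$, I would partition $U$ into two nonempty blocks $\{S,T\}$ and select three tuples in $M_U$, read off the generating set of $\rho$, whose absent fourth corner is the constant tuple $\hat{x}$ restricted to $U$.

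Two obstacles dominate. In $(\Rightarrow)$, the delicate point is checking that the coordinate-wise application of $F$ lands in $R$ rather than merely in the cover $R^+$; this is where direct indecomposability of $R$ must carry the argument, and where the careful distribution of rows of $M$ across the blocks $S$ and $T$ becomes essential. In $(\Leftarrow)$, the main difficulty is securing $|U|=k$ rather than merely $|U|\geq 2$, so that the hypothesis on critical relations of arity at least $k$ actually applies to $M_U$; smaller supports correspond to cube-term failures of lower rank that must be excluded, most likely by an induction on $k$ or by reworking the construction in a larger power $\m F^N$ with an adjusted generating set designed to force all $k$ coordinates of the minimal support to participate.
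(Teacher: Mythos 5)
The paper does not prove this theorem; it simply cites Theorems~3.5 and~3.6(3) of \cite{parallelogram}, so there is no ``paper's own proof'' to compare against line by line. Judged on its own terms, your proposal has the right shape for the backward direction but a genuine gap in the forward direction.

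On the backward direction: the reformulation ``no $0$-pointed $k$-cube term $\iff \hat x\notin\rho$'' is correct and is the right starting point, and your worries about forcing $|U|=k$ are in fact unfounded --- there is no need for induction or a larger power. If $M$ is a subuniverse of $\m F^k$ containing $\rho$ and maximal with respect to avoiding $\hat x$, then $M$ is already critical of arity exactly $k$. Indeed, if $M$ decomposed as $S\times T$ over a nontrivial partition $\{X,Y\}$ of $[k]$, then the single-$y$ tuple with its $y$ in $Y$ projects to $\hat x|_X$, so $\hat x|_X\in\pi_X(\rho)\subseteq S$; symmetrically $\hat x|_Y\in T$, whence $\hat x\in S\times T=M$, a contradiction. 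This makes $M$ directly indecomposable and completely $\cap$-irreducible, hence critical, and the four-corner argument you sketch (with $\hat x$ as the absent corner) then exhibits a PP failure. Note, however, that invoking \lmref{lm-minsupp} here is a misapplication: that lemma is stated for maximal subuniverses of $\m a^n$, whereas your $M$ is only maximal subject to omitting $\hat x$. The detour through a minimal support $U$ is both unjustified and unnecessary once you see $M$ itself is critical.

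On the forward direction the argument as written does not close. The decisive step --- ``applying $F$ coordinate-wise collapses these to $\wec b\wec u$, and the direct indecomposability of $R$ is invoked to force the output into $R$ rather than merely into $R^+$'' --- has no mechanism behind it. If some of the arguments $\wec r_j$ lie only in $\bar R$ (or $R^+$) and not in $R$, then $F(\wec r_1,\dots,\wec r_m)$ lies in $\bar R$ (or $R^+$), and concluding $\wec b\wec u\in\bar R$ is vacuous, since $\wec b\wec u\in\bar R$ by definition (take $\wec c=\wec a$, $\wec w=\wec v$). Direct indecomposability is a statement about the shape of $R$, not a device for pushing elements of a cover down into $R$. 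To make the cube identities pay off, you must arrange the row assignment $p\mapsto i(p)$ so that every column tuple $\wec r_j$ already lies in $R$ (i.e.\ each $\wec r_j$ equals one of $\wec a\wec u$, $\wec a\wec v$, $\wec b\wec v$ after suitable block substitutions), and then the identity gives $\wec b\wec u=F(\wec r_1,\dots,\wec r_m)\in R$ outright. That forcing is exactly where $n\ge k$ and the ``each column has a $y$'' constraint must interact, and it is nontrivial: a naive two-row assignment can still leave a column whose $x$-pattern produces $\wec b\wec u$ on the nose. This is precisely the combinatorial content that \cite{parallelogram} packages into the notion of a parallelogram term (equivalent to a cube term by their Theorem~3.5), and your sketch currently skips it.
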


It follows from this theorem and Lemma~\ref{lm-minsupp}
that if $\m a$ has 
a $0$-pointed, $k$-cube term, and $M$ is a maximal
subuniverse of $\m a^n$ 
%, then 
%either
%$M$ is induced by a relation of arity less than $k$
%or $M=M_U\times A^{U'}$ is induced by a critical maximal
%subuniverse $M_U\leq \m a^U$ that has the parallelogram property.
with minimal support $U\subseteq[n]$, then
$M=M_U\times A^{U'}$ ($U'=[n]\setminus U$), and
either
$M_U$ has arity less than $k$ or 
$M_U$ is a critical maximal subuniverse of $\m a^U$ 
that has the parallelogram property.
(In the latter case, $M$ itself will also have the parallelogram
property.) Our next step is to investigate the structure
of maximal subuniverses with the parallelogram property.

The paper \cite{parallelogram} analyzes arbitrary
compatible relations with the parallelogram property
in congruence modular varieties. It is shown in
\cite{bimmvw} that any algebra with a $0$-pointed,
$k$-cube term generates a congruence modular
variety, so the results of \cite{parallelogram} apply here.
The first step in the analysis is the ``reduction''
of a relation, which we describe next.

Suppose that $R\leq \m a^n$ is a compatible relation with
the parallelogram property; as a special case, suppose
that $M\leq \m a^n$ is a maximal critical subuniverse 
with the parallelogram property.
For the first step in the reduction, 
realize $R$ as a subdirect product
$R\leq_{\text{sd}} \prod_{i=1}^n \m a_i$, 
where $\m a_i:= \pi_i(R)\leq \m a$.
In the special case involving the maximal subuniverse
$M$ we will have 
$\m a_i= \pi_i(M) =  \m a$ unless the projection of $M$
onto one single coordinate is not surjective.
This happens only if $M$ has a support of size one,
which, by criticality, implies that $M$ is a unary relation.
We henceforth consider only $M$
of arity at least two, so that in our special case 
$\pi_i(M)=A$ for all $i$. Thus, in the first step in reduction,
nothing happens if $M$ is maximal and of arity greater than one.

Second, define relations, called \emph{coordinate kernels}
in \cite{parallelogram}, 
\[
\theta_i = \{(a,b)\in A_i^2\;|\;\exists \wec{c}\in \prod_{j\neq i}\m a_j\;
(a\wec{c}\in R\;\&\;b\wec{c}\in R)\}.
\]
It is proved in Lemma~2.3 of \cite{parallelogram} that
(i) each $\theta_i$ is a congruence on $\m a_i$, and
(ii) $R$ is induced by the homomorphism 
$\psi\colon \prod \m a_i\to \prod \m a_i/{\theta_i}$ that is the natural
map in each coordinate.
The relation $\overline{R}=\psi(R)$ is the \emph{reduction} of $R$.

In our special case $M\leq \m a^n$ %, where $M$ 
is critical and
maximal, %we observe 
therefore by Lemma~\ref{lm-induced}
%two things. 
%First, we observe that each coordinate kernel is a proper congruence
%on $\m a$. To see this, assume instead that, say, $\theta_n$ is the total
%binary relation on $A$. Since $M\neq \m a^n$, there
%is a tuple $\wec{a}c\in A^n\setminus M$. Since $M$ is critical
%and $n>1$, $\pi_{[n-1]}(M) = A^{n-1}$, 
%so there is a tuple $\wec{a}d\in M$ whose first $n-1$
%coordinates agree with those of the previous tuple.
%The fact
%that $\theta_n$ is the total binary relation on $A$ implies
%that there exists $\wec{b}\in A^{n-1}$ such that $\wec{b}c, \wec{b}d\in M$.
%Now we have a contradiction to the
%paralellogram property: $\wec{a}d, \wec{b}d, \wec{b}c\in M$
%and $\wec{a}c\notin M$. We conclude that no coordinate
%kernel is the total binary relation.
%Second, we observe 
%that 
its reduction
$\overline{M} = \psi(M)$ is a maximal
subuniverse of $\prod \m a/{\theta_i}$. 
%For, if $M<M'<\prod \m a/{\theta_i}$,
%then 
%$M=\psi^{-1}(M)<\psi^{-1}(M') < \m a^n$, contradicting the maximality of $M$.

The next result is a specialization of 
(some parts of) Theorem~2.5 of \cite{parallelogram}
to the case where $M$ is a critical maximal subuniverse of $\m a^n$
and $n>1$. We maintain the numbering
of \cite{parallelogram}, but omit the unused parts
of the theorem.

\begin{thm}\label{para_main}
Let $M$ be a critical maximal subuniverse of $\m a^n$
that satisfies the parallelogram property,
and let $\overline{M}\leq \prod \m a/{\theta_i}$
be its reduction. If
%With the notation and assumptions of the previous paragraph,
%along with the assumptions that 
$n>1$ and $\m a$ lies in a congruence modular variety, 
then the following hold.
\begin{enumerate}
\item[$(1)\hphantom{^*}$] 
$\overline{M}\leq \prod \m a/{\theta_i}$ is a representation 
of $\overline{M}$ as a subdirect product of subdirectly
irreducible algebras.
%\item[$(3)^*$]
%The projection of $\overline{M}$ onto any $n-1$ coordinates
%is an isomorphism.
\item[$(5)^*$]
If $n>2$, then the monolith of $\m a/{\theta_i}$
is the total relation; i.e. $\m a/{\theta_i}$ is simple.
\item[$(7)^*$] 
If $n>2$, then each simple algebra 
$\m a/{\theta_i}$ is abelian.
\end{enumerate}
\end{thm}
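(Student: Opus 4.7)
The theorem packages three items from Theorem~2.5 of \cite{parallelogram}, so my plan is to specialize that argument to the present setting. All three parts rest on the same setup: the reduction construction combined with the criticality and maximality of $M$ as a subuniverse of $\m a^n$.

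For (1), the reduction map $\psi\colon \prod \m a_i \to \prod \m a/\theta_i$ is surjective on each coordinate, hence $\overline{M} = \psi(M)$ is automatically a subdirect product of the quotients $\m a/\theta_i$. The substantive content is that each $\m a/\theta_i$ is subdirectly irreducible, which I would reduce to showing that $\theta_i$ is completely meet-irreducible in $\Con(\m a)$. Any congruence $\eta > \theta_i$ corresponds via Lemma~\ref{lm-induced} to an enlargement of the $i$-th slice of $M$; if $\theta_i$ were the intersection of strictly larger congruences, the corresponding enlargements would intersect to a proper superset of $M$, contradicting complete meet-irreducibility of $M$ in $\Sub(\m a^n)$.

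For (5)*, I would suppose for contradiction that the monolith $\mu$ of $\m a/\theta_i$ is not the total relation, and pull it back to a congruence $\widetilde{\mu}$ with $\theta_i < \widetilde{\mu} < 1_{\m a}$. Since $n > 2$, one may pick two indices $j,k$ distinct from $i$; applying the parallelogram property to partitions that separate $i$ from $j$ and from $k$ should allow one to amalgamate $\widetilde{\mu}$-related tuples into $\overline{M}$ in a way that exhibits a proper enlargement of $M$, contradicting maximality. The extra room to pivot coordinates is precisely why the argument requires $n > 2$.

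For (7)*, one must invoke commutator theory in the congruence modular variety generated by $\m a$ (which is modular since $\m a$ has a $0$-pointed cube term, as noted in the text). The parallelogram property forces a rectangular behavior of $\overline{M}$, and the key lemma of \cite{parallelogram} converts this rectangularity into abelianness of each simple factor $\m a/\theta_i$ when $n > 2$, roughly by exhibiting a Maltsev-like operation on $\m a/\theta_i$ that witnesses $[1,1] = \Delta$. I expect this last step to be the main obstacle, since it draws on the modular commutator machinery rather than the direct combinatorial considerations that suffice for (1) and (5)*.
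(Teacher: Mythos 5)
Your proposal correctly identifies the structure: the paper does not prove this theorem from scratch, it simply cites Theorem~2.5 of \cite{parallelogram} and records two modifications --- item $(5)^*$ strengthens the original item~(5) using the fact (from Lemma~\ref{lm-induced}) that $\overline{M}$ is a \emph{maximal} subuniverse of $\prod\m a/\theta_i$, and item $(7)^*$ restates the original item~(7) in the presence of the simplicity established in $(5)^*$. Your attempt to additionally sketch the proofs of the three items goes beyond what the paper supplies, and those sketches are too loose to stand on their own: for~(1), the correspondence you invoke between congruences $\eta > \theta_i$ and enlargements of $M$ is not set up (this is where the coordinate-kernel machinery and the parallelogram property, via \cite[Lemma~2.3]{parallelogram}, actually do the work); for $(5)^*$, the amalgamation using two auxiliary indices $j,k$ is only gestured at; and for $(7)^*$ you defer entirely to \cite{parallelogram}.

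The main thing your account leaves out is what the \emph{unmodified} items~(5) and~(7) of \cite{parallelogram} assert, since the paper's entire contribution in this theorem is to upgrade them. The original~(5) gives a conclusion about the monolith of $\m a/\theta_i$ that does not presuppose $\overline{M}$ is maximal; $(5)^*$ then observes that when $\overline{M}$ is maximal there is no room for a proper nontrivial quotient, so $\m a/\theta_i$ is simple. The original~(7) is an abelianness statement tied to the monolith; once simplicity is in hand from $(5)^*$, it becomes abelianness of all of $\m a/\theta_i$. Your sketch for $(5)^*$ argues from maximality of $M$ in $\Sub(\m a^n)$ rather than from maximality of $\overline{M}$ in $\Sub(\prod\m a/\theta_i)$; this is salvageable because Lemma~\ref{lm-induced} carries maximality through the reduction map $\psi$, but the paper states the modification in terms of $\overline{M}$ and you should too. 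None of this is a wrong turn, but in a write-up you should replace the sketched re-derivations with the citation and concentrate on justifying the two asterisked upgrades.
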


Here, items 
%(3), 
(5) and (7) are marked with asterisks, because
we have altered the statement of (5) from \cite{parallelogram}
in order to take into account that $\overline{M}$
is a \emph{maximal} subuniverse of $\prod \m a/{\theta_i}$ 
%(this was
%taken into account in the formulation of $(5)^*$)
%and that the projection of $\overline{M}$
%onto any $n-1$ coordinates is surjective (this was
%taken into account in the formulation of $(3)^*$).
and we have altered the statement of (7)
%Item (7) is marked 
in order 
to take into account the
conclusion from $(5)^*$ that 
$\m a/{\theta_i}$ is simple.

We explain what this theorem contributes to our current investigation.
Suppose that $\m a$ has a $0$-pointed,
$k$-cube term. Suppose also that
$M\leq \m a^n$ is maximal, $U$
is the minimal support of $M$,
and $M=M_U\times A^{U'}$ is induced by $\pi_U\colon \m a^n\to \m a^U$.
If $|U|$ is at least as large as $\max\{3, k\}$,
then the theorem proves that 
$M_U$ is induced by a homomorphism 
$\psi\colon \m a^U\to \prod_U \m a/{\theta_i}$ where 
each factor $\m a/{\theta_i}$ is a simple abelian algebra.
Thus, $M$ itself is induced by the composition
of the surjective homomorphisms
\[
\m a^n 
\stackrel{\pi_U}{\longrightarrow} 
\m a^U
{\longrightarrow} 
\left(\m a/{[1,1]}\right)^U
\longrightarrow
\prod_U \m a/{\theta_i},
\]
where the last two maps are a factorization
of the map 
$\psi\colon \m a^U\to \prod_U \m a/{\theta_i}$ which induces $M_U$,
and these two maps are defined
coordinatewise by the natural maps
$\m a\to \m a/{[1,1]}\to \m a/\theta_i$.
(We have $\theta_i\geq [1,1]$, since $\m a/{\theta_i}$ is abelian.)
Hence $M$ is induced by the sub-composition
$\m a^n 
\stackrel{\pi_U}{\longrightarrow} 
\m a^U
{\longrightarrow} 
\left(\m a/{[1,1]}\right)^U$, which may be factored another way
as 
$\m a^n 
\stackrel{\eta}{\longrightarrow} 
\left(\m a/{[1,1]}\right)^n
\stackrel{\pi_U}{\longrightarrow} 
\left(\m a/{[1,1]}\right)^U$.
Hence $M$ is induced by the single map
$\eta$, which maps $\m a^n$ onto its abelianization.
Altogether this proves the desired result:

\begin{thm}\label{induced}
Assume that $\m a$ has a $0$-pointed, $k$-cube term.
If $M\leq \m a^n$ is a maximal subuniverse, then either
\begin{enumerate}
\item[($\pi$)] 
$M$ is induced by a projection $\pi_U\colon \m a^n\to \m a^U$
for some subset $U\subseteq [n]$ satisfying $|U| < \max\{3,k\}$, or 
\item[($\eta$)]
$M$ is induced by $\eta\colon \m a^n\to (\m a/{[1,1]})^n$.
\end{enumerate}
\end{thm}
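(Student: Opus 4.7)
My plan is to organize the argument sketched in the paragraph immediately preceding the theorem statement into a clean case split driven by Lemma~\ref{lm-minsupp}. First I would apply that lemma to the maximal subuniverse $M\leq\m a^n$ to obtain its unique minimal support $U\subseteq[n]$, the factorization $M=M_U\times A^{U'}$ with $U'=[n]\setminus U$, and the fact that $M_U=\pi_U(M)$ is a critical maximal subuniverse of $\m a^U$ induced via $\pi_U$. Then I would split on whether $|U|<\max\{3,k\}$ or $|U|\geq\max\{3,k\}$. In the former case, conclusion $(\pi)$ holds immediately, with the same $U$. In the latter case I aim for conclusion $(\eta)$.

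For the case $|U|\geq\max\{3,k\}$, I would invoke that $\m a$ generates a congruence modular variety (by \cite{bimmvw}, since $\m a$ has a $0$-pointed $k$-cube term), and that $M_U$ (of arity $|U|\geq k$) enjoys the parallelogram property by the theorem quoted before Theorem~\ref{induced}. Then I would apply Theorem~\ref{para_main} with arity $|U|>2$ to deduce that in the reduction $\overline{M_U}=\psi(M_U)\leq\prod_{i\in U}\m a/\theta_i$, each factor $\m a/\theta_i$ is simple and abelian. Abelianness forces $[1,1]^{\m a}\leq\theta_i$ for every $i\in U$, so the natural surjection $\m a\to\m a/\theta_i$ factors through $\m a/[1,1]$. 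Taking products over $U$, the reduction homomorphism $\psi$ factors as
\[
\m a^U \longrightarrow (\m a/[1,1])^U \longrightarrow \prod_{i\in U}\m a/\theta_i,
\]
so that $M_U$ is already induced by the first arrow.

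Finally, composing with $\pi_U$, I would conclude that $M$ is induced by the map $\m a^n\xrightarrow{\pi_U}\m a^U\to(\m a/[1,1])^U$, and then rewrite this composition as $\m a^n\xrightarrow{\eta}(\m a/[1,1])^n\xrightarrow{\pi_U}(\m a/[1,1])^U$ to conclude that $M$ is induced by $\eta$. The only genuinely non-trivial step is the appeal to Theorem~\ref{para_main}; the main bookkeeping nuisance will be checking at each stage of the factorization that the image of $M$ is a proper subuniverse of the codomain, so that Lemma~\ref{lm-induced} (or the observation following it that a relation induced by $\beta\circ\alpha$ is induced by $\alpha$ when $\alpha$ is surjective) legitimately propagates ``inducedness'' from the long composition back to $\eta$. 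Once these surjectivity and properness checks are in place, the proof is essentially the paragraph preceding the statement, turned into a formal argument.
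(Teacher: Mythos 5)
Your proposal is correct and follows the paper's argument essentially verbatim: apply Lemma~\ref{lm-minsupp} to reduce to the minimal support $U$, split on whether $|U|<\max\{3,k\}$, and in the large-support case use Theorem~\ref{para_main} (via congruence modularity from \cite{bimmvw} and the parallelogram property) to factor the inducing map through $(\m a/[1,1])^U$ and then through $\eta$. The one small overcaution is worrying about properness of images at each stage: once $M$ is induced by a composition $\beta\circ\alpha$, the purely set-theoretic inclusion chain $M\subseteq\alpha^{-1}(\alpha(M))\subseteq(\beta\circ\alpha)^{-1}((\beta\circ\alpha)(M))=M$ gives that $M$ is induced by $\alpha$ with no properness hypothesis needed.
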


\section{A solution to a combinatorial problem}\label{solution}

To derive our result
on growth rates from Theorem~\ref{induced},
we will use a solution to the following problem:
If $B$ is a finite set and $n\geq k>1$ are 
integers, then how small can a set $G\subseteq B^n$ be
if its projection onto any subset of $k$ coordinates
is surjective?

If $B$ is finite, $G\subseteq B^n$ and $|G|=g$, 
then $G$ can be linearly ordered
and taken to be the sequence of 
rows of a $g\times n$ matrix of elements
of $B$, say $[b_{i,j}]$. If 
\[
\sigma\colon \quad 1 \leq j(1) < \cdots < j(k) \leq n
\]
is a selection of $k$ numbers between $1$ and $n$, then 
the projection of $G$ onto the
coordinates in $\sigma$ 
is the set of row vectors
$(b_{1, j(1)}, \ldots, b_{1, j(k)}), \ldots, 
(b_{g, j(1)}, \ldots, b_{g, j(k)})$ which 
occur as the set of rows of the $g\times k$
minor of $[b_{i,j}]$ whose column indices
are the indices in $\sigma$.
$G$ projects surjectively onto each $k$ coordinates of $B^n$ 
if and only if, for each choice $\sigma$
of $k$ column indices, the set of row vectors 
of the corresponding $g\times k$ minor of $[b_{i,j}]$ exhausts $B^k$.
Therefore, call a $g\times k$ matrix of elements of $B$
a \emph{bad minor} (or \emph{bad matrix})
if its rows fail to exhaust $B^k$.
The desired property of $G$ is that its associated
matrix has no bad minors.

\begin{thm}\label{surj_projections}
Let $B$ be a finite set of size $|B|=b > 1$.
Let $n\geq k > 1$ be natural numbers, and set
%$\displaystyle u = \frac{a^k}{a^k-1}$.
$\displaystyle u = b^k/(b^k-1)$.
If 
$
g\geq k\log_u\left(n\right) + \log_u\left(b^k/k!\right),
$
then there is a matrix in $B^{g\times n}$ with no bad minors.
\end{thm}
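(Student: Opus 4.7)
I would prove this by the probabilistic method, which is a very clean fit here because ``no bad minors'' is a conjunction of ``simple'' events indexed by pairs (column-selection, target-tuple).

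The plan is to construct the matrix uniformly at random: fill each of the $gn$ entries independently and uniformly from $B$, and then show that the expected number of bad minors is strictly less than $1$. Fix a choice $\sigma$ of $k$ columns and a target tuple $\wec v\in B^k$. For a single row of the matrix, the probability that its restriction to $\sigma$ equals $\wec v$ is $b^{-k}$; hence the probability that none of the $g$ rows hits $\wec v$ on $\sigma$ is $(1-b^{-k})^g = u^{-g}$. A union bound over the $b^k$ possible tuples $\wec v$ shows that the probability that the minor picked out by $\sigma$ is bad is at most $b^k u^{-g}$. A second union bound, over the $\binom{n}{k}$ possible $\sigma$'s, gives
\[
\mathbb E[\#\text{bad minors}]\;\leq\; \binom{n}{k}\,b^k\,u^{-g}\;\leq\;\frac{n^k b^k}{k!}\,u^{-g}.
\]

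Next I would use the hypothesis $g\geq k\log_u n+\log_u(b^k/k!)$, which is exactly the statement that $u^g\geq n^k b^k/k!$; substituting this in gives
\[
\mathbb E[\#\text{bad minors}]\;\leq\;\frac{\binom{n}{k}\,k!}{n^k}.
\]
Since $k>1$ and $n\geq k$, the inequality $\binom{n}{k}\,k!<n^k$ is strict (it is just $n(n-1)\cdots(n-k+1)<n^k$, which fails to be an equality whenever $k\geq 2$). Therefore the expected number of bad minors is strictly less than $1$. Because this expectation is an average of a nonnegative integer-valued random variable, at least one matrix in $B^{g\times n}$ must realize the value $0$, i.e.\ have no bad minors, as required.

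The only delicate point in the argument is the strict-versus-weak inequality: the hypothesis is stated with $\geq$, while the probabilistic method needs $<1$. That is why one must be a little careful to bound $\binom{n}{k}$ by $n^k/k!$ \emph{strictly}; everything else is a routine application of independence and the union bound. No assumption on $\m a$ or on cube terms is used here — this is a purely combinatorial lemma that will be combined with Theorem~\ref{induced} in the next section.
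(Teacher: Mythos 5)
Your proof is correct and follows essentially the same argument as the paper: uniform random matrix, union bound over the $b^k$ missing tuples to bound the probability of a bad $\sigma$-minor by $b^k u^{-g}$, union bound over the $\binom{n}{k}$ choices of $\sigma$, the strict inequality $\binom{n}{k}<n^k/k!$ for $k\geq 2$, and then solving for $g$. The only difference is cosmetic — the paper phrases the first union bound as a count of bad $g\times k$ matrices divided by $b^{gn}$ rather than directly as a probability, but the computation is identical.
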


\begin{proof}
This is a probabilistic proof.
Our sample space is the set $B^{g\times n}$ of all $g\times n$
matrices of elements of $B$. 
Our probability distribution is the uniform one, 
so each individual matrix $M\in B^{g\times n}$ 
has probability $P(M)=|B^{g\times n}|^{-1} = b^{-gn}$.
For each matrix $M\in B^{g\times n}$ and 
each sequence of $k$ column indices, 
\[
\sigma\colon \quad 1 \leq j(1) < \cdots < j(k) \leq n,
\]
let $M_{\sigma}$ denote the $g\times k$ minor of $M$
whose column indices are those enumerated by $\sigma$
(called the \emph{$\sigma$-minor} of $M$).
Let $X_{\sigma}$ be the random variable whose value 
at the element $M\in B^{g\times n}$ is $1$ if 
$M_{\sigma}$ is a bad minor and $0$ otherwise,
i.e., $X_{\sigma}$ is the indicator variable for bad $\sigma$-minors.

\begin{clm}\label{expectation_clm}
For any $\sigma$, 
the expected value of $X_{\sigma}$ satisfies
\begin{equation}\label{first_estimate}
E(X_{\sigma})\leq b^k\left(b^k-1\right)^gb^{-gk}.
\end{equation}
\end{clm}

The expectation is computed 
\[
\begin{array}{rl}
E(X_{\sigma}) 
&=\; \sum_{M\in B^{g\times n}} (X_{\sigma}(M)\cdot P(M))\\
&=\; \sum_{M\in B^{g\times n}} (X_{\sigma}(M)\cdot b^{-gn})\\
&=\; \left(\sum_{M\in B^{g\times n}} X_{\sigma}(M)\right)b^{-gn},
\end{array}
\]
where the sum $\sum_{M\in B^{g\times n}} X_{\sigma}(M)$
on the last line represents the number matrices
in $B^{g\times n}$ whose $\sigma$-minor is bad.
By definition, a $g\times k$ matrix is bad if some
tuple $\wec{b}\in B^k$ does not appear among its rows.
So, 
for each $\wec{b}\in B^k$, let $\mathcal U_{\wec{b}}$
denote the set of all $g\times k$ matrices where
$\wec{b}$ does not appear among the rows.
$|\mathcal U_{\wec{b}}|$ can be computed by noting that
the $g$ rows of a matrix in $\mathcal U_{\wec{b}}$
may be freely chosen
from the set $B^k-\{\wec{b}\}$, which has
size $b^k-1$, so $|\mathcal U_{\wec{b}}| = (b^k-1)^g$.
The bad $g\times k$ matrices are those from
$\bigcup_{\wec{b}\in B^k} \mathcal U_{\wec{b}}$. Since
the cardinality of the union is no more than the sum of the
individual cardinalities, and these summands have the
same size,  we get that the number of bad
$g\times k$ matrices is no more than
$|B^k|\cdot |\mathcal U_{\wec{b}}| = b^k(b^k-1)^g$.
Each bad $g\times k$ matrix $N$ can be extended in 
$b^{g(n-k)}$ ways to a matrix $M\in B^{g\times n}$ whose
$\sigma$-minor satisfies $M_{\sigma}=N$, so 
the number of matrices in $B^{g\times n}$ with
a bad $\sigma$-minor is no more than 
$b^k(b^k-1)^gb^{g(n-k)}$.
Hence 
\[
E(X_{\sigma}) = 
\left(\sum_{M\in B^{g\times n}} X_{\sigma}(M)\right)b^{-gn}
\leq b^k(b^k-1)^g b^{g(n-k)} b^{-gn} = b^k(b^k-1)^g b^{-gk},
\]
as claimed.

\medskip

If $X:=\sum_{\sigma} X_{\sigma}$ is the sum of all
$X_{\sigma}$ as $\sigma$ ranges
over all $\binom{n}{k}$ choices of $k$ column indices
and $M\in B^{g\times n}$, then $X(M)$
equals the number of bad $g\times k$ minors of $M$. Since 
expectation is linear, and since 
$\binom{n}{k} < n^k/k!$ when $n\geq k > 1$, we get 
from (\ref{first_estimate})
that 
\[
E(X) = \sum_{\sigma} E(X_{\sigma}) \leq \binom{n}{k} b^k(b^k-1)^g b^{-gk} <
n^k(b^k/k!)(b^k-1)^g b^{-gk}.
\]
If it is the case that 
\begin{equation}\label{bound}
n^k(b^k/k!)(b^k-1)^g b^{-gk}\leq 1,
\end{equation}
then we will have $E(X) < 1$, meaning that the expected number of 
bad minors in an element of $B^{g\times n}$
is strictly less than $1$. This can happen only
if matrices without bad minors exist.
Rewriting (\ref{bound}) as
\[
n^k \leq 
\left(\frac{b^{k}}{\left(b^k-1\right)}\right)^g(b^k/k!)^{-1} = 
u^g(b^k/k!)^{-1},
\]
using the definition $u = b^k/(b^k-1)$, we can solve for $g$
to get
\begin{equation}\label{4rth}
g\geq k\log_u(n) + \log_u(b^k/k!).
\end{equation}
When this inequality holds
we get that (\ref{bound}) holds,
so a matrix with no bad minors exists. This is exactly the statement
of the theorem.
\end{proof}

\begin{cor}\label{surj_projections_cor}
Let $B$ be a finite set of size $|B|=b > 1$.
Let $n\geq k > 1$ be natural numbers, and set
$u = b^k/(b^k-1)$.
If 
$
g=\lceil k\log_u\left(n\right) + \log_u\left(b^k/k!\right)\rceil,
$
then there exists a subset $G\subseteq B^n$ of size $g$
whose projection onto any
$k$ coordinates of $B^n$ is surjective.
\end{cor}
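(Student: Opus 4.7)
The plan is to invoke Theorem~\ref{surj_projections} directly and translate its matrix-theoretic conclusion into the set-theoretic language of the corollary. Since $g = \lceil k\log_u(n) + \log_u(b^k/k!) \rceil$ certainly satisfies $g \geq k\log_u(n) + \log_u(b^k/k!)$, the hypothesis of Theorem~\ref{surj_projections} is met, so there exists a matrix $M \in B^{g\times n}$ with no bad minors.

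I would then let $G$ denote the set of rows of $M$, viewed as a subset of $B^n$. For any selection $\sigma\colon 1 \leq j(1) < \cdots < j(k) \leq n$ of $k$ column indices, the projection of $G$ onto the coordinates enumerated by $\sigma$ coincides with the set of distinct row vectors of the $\sigma$-minor $M_{\sigma}$; and because $M_\sigma$ is not bad, these row vectors exhaust $B^k$, giving $\pi_\sigma(G) = B^k$.

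The only point requiring attention is that the corollary demands $|G|=g$ on the nose, whereas the construction above only guarantees $|G|\leq g$, with equality precisely when the $g$ rows of $M$ are pairwise distinct. If strict inequality holds, I would simply pad $G$ by adjoining arbitrary further elements of $B^n$ until its cardinality is exactly $g$; this is possible provided $g\leq b^n$, which is the only regime in which the statement can hold at all, and adjoining elements preserves surjectivity of every $k$-coordinate projection since it can only enlarge images. This cosmetic bookkeeping is the sole ``obstacle''; the substantive content of the corollary is already contained in Theorem~\ref{surj_projections}.
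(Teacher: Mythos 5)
Your proof is correct and follows essentially the same approach as the paper's: apply Theorem~\ref{surj_projections} to obtain a matrix with no bad minors and take $G$ to be its set of rows. The one thing you do that the paper does not is flag the possibility of repeated rows, so that $|G|$ might be strictly less than $g$. The paper's proof silently asserts ``has size $g$,'' which is, strictly speaking, a small imprecision; your padding remark (and the caveat that $g\leq b^n$ is needed for the literal claim to make sense) addresses it. Since the corollary is used downstream only to produce a generating set of size $O(\log n)$, a set of size $\leq g$ serves just as well, so this is genuinely only cosmetic bookkeeping, as you say.
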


\begin{proof}
By the theorem, there is a matrix in $B^{g\times n}$ with no bad minors.
The set $G\subseteq B^n$ consisting of the 
rows of this matrix has size $g$ and projects surjectively onto any
$k$ coordinates of $B^n$.
\end{proof}

\begin{cor}\label{surj_projections_cor_2}
Let $\m a$ be an algebra, and suppose
that for some $k>1$ the algebra
$\m a^k$ is generated by a finite
set $H\subseteq A^k$.
Let $B\subseteq A$ be the finite set of elements of that
appear in the coordinates of tuples in $H$.
Set $b = |B|$ and $u = b^k/(b^k-1)$.
For any $n\geq k$, if 
$
g=\lceil k\log_u\left(n\right) + \log_u\left(b^k/k!\right)\rceil,
$
then there exists a subset $G\subseteq B^n\subseteq A^n$ of size $g$
such that the subalgebra $\m s = \langle G\rangle\leq \m a^n$
has the property that the projection of $\m s$ onto any
$k$ coordinates of $\m a^n$ is surjective.
\end{cor}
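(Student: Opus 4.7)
The plan is to reduce this directly to Corollary~\ref{surj_projections_cor}, doing only a single extra step to pass from ``projection surjects onto $B^k$'' to ``projection surjects onto $A^k$'', which is where the generating hypothesis on $H$ comes in. Since the set $B\subseteq A$ is finite with $|B|=b>1$, Corollary~\ref{surj_projections_cor} applies to $B$ and yields a subset $G\subseteq B^n$ of size exactly $g$ whose projection onto any $k$-element set of coordinates $\sigma\subseteq[n]$ is all of $B^k$. Since $B\subseteq A$, the set $G$ lies in $A^n$, so we may form the subalgebra $\m s=\langle G\rangle\leq \m a^n$.

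Next I would verify that $\pi_\sigma(\m s)=A^k$ for every $k$-subset $\sigma\subseteq[n]$. The projection $\pi_\sigma\colon \m a^n\to \m a^k$ is a (surjective) homomorphism, and homomorphisms carry generated subalgebras to generated subalgebras, so
\[
\pi_\sigma(\m s)=\pi_\sigma(\langle G\rangle)=\langle \pi_\sigma(G)\rangle^{\m a^k}.
\]
By the previous step $\pi_\sigma(G)=B^k\supseteq H$, and by hypothesis $\langle H\rangle^{\m a^k}=A^k$. Combining these gives
\[
\pi_\sigma(\m s)=\langle \pi_\sigma(G)\rangle^{\m a^k}\supseteq \langle H\rangle^{\m a^k}=A^k,
\]
so $\pi_\sigma(\m s)=A^k$, as required.

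There is no genuine obstacle here: the heavy combinatorial lifting is already done by Theorem~\ref{surj_projections} and its corollary, and the algebraic content reduces to the elementary fact that projections commute with the formation of generated subalgebras. The only point that deserves a line of care is that the size bound $g=\lceil k\log_u(n)+\log_u(b^k/k!)\rceil$ is computed from $b=|B|$ (not from $|A|$, which may well be infinite), which is exactly the setting of Corollary~\ref{surj_projections_cor}.
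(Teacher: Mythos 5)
Your proposal is correct and follows essentially the same route as the paper: invoke Corollary~\ref{surj_projections_cor} to get $G\subseteq B^n$ with $\pi_\sigma(G)=B^k$ for every $k$-subset $\sigma$, then use $H\subseteq B^k$ and the generating hypothesis to conclude $\pi_\sigma(\langle G\rangle)=\langle\pi_\sigma(G)\rangle\supseteq\langle H\rangle=A^k$. If anything your write-up is a bit cleaner in how it phrases the final step (projections commute with generated subalgebras), but the argument is identical to the paper's.
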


\begin{proof}
Here we choose $G\subseteq B^n$ as in Corollary~\ref{surj_projections_cor}
so that the projection of $G$ onto any $k$ coordinates of $B^n$
is surjective. Any projection 
$\pi_U\colon \m s = \langle G\rangle\to \m a^U$
of the subalgebra $\m s\leq \m a^n$
onto a $k$-element set $U\subset [n]$
contains the projection of 
the subset $B^n\subseteq S$
onto those $k$ coordinates, and $\pi_U(B^n)=B^U$
is a generating
set for $\m a^U$. Hence $\m s\leq \m a^n$
has the property that the projection onto any
$k$ coordinates of $\m a^n$ is surjective.
\end{proof}

\section{Growth rates of algebras with a cube term}\label{growth_cube}

In this section we combine the preceding 
results to obtain the following.

\begin{thm}\label{growth_cube_thm}
Suppose that $\m a$ has a $0$-pointed, $k$-cube term
and that $\m a^k$ is finitely generated.
If $\m a$ perfect, then 
$d_{\m a}(n)\in O(\log(n))$.
If $\m a$ is imperfect, then
$d_{\m a}(n)\in O(n)$.
\end{thm}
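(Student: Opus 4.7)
The strategy is to use Theorem~\ref{induced} to classify the maximal subuniverses of $\m a^n$ and then defeat each type with a small explicit generating set. Recall that a subset $G \subseteq A^n$ generates $\m a^n$ if and only if $G$ is contained in no maximal subuniverse. By Theorem~\ref{induced}, every maximal subuniverse of $\m a^n$ is either induced by a projection $\pi_U$ with $|U| \leq \max\{3,k\} - 1$ or induced by the abelianization map $\eta \colon \m a^n \to (\m a/[1,1])^n$. The plan is to build sets $G_1, G_2 \subseteq A^n$ that defeat the two types respectively, and take their union.

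Set $K = \max\{3, k\} - 1$. By the remark following Theorem~\ref{pointed_polynomial_cor}, all finite powers of $\m a$ are finitely generated; in particular $\m a^K$ is. Applying Corollary~\ref{surj_projections_cor_2} with parameter $K$ in place of $k$ yields a set $G_1 \subseteq A^n$ of size $\lceil K \log_u n + c \rceil \in O(\log n)$ (where $u > 1$ and $c$ depend only on $\m a$ and $K$) such that $\pi_U(\lb G_1 \rb) = \m a^U$ for every $U \subseteq [n]$ with $|U| = K$, and hence also for smaller $U$. Consequently $G_1$ lies in no type $(\pi)$ maximal subuniverse.

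If $\m a$ is perfect, then $[1,1]$ is the total congruence of $\m a$, so $(\m a/[1,1])^n$ is a one-element algebra with no proper subuniverse and no type $(\eta)$ maximal subuniverse exists. In this case $G_1$ alone generates $\m a^n$, giving $d_{\m a}(n) \in O(\log n)$. If $\m a$ is imperfect, then $\m a/[1,1]$ is a nontrivial abelian algebra; since $\m a$ has a cube term, its variety is congruence modular (\cite{bimmvw}), so $\m a/[1,1]$ is affine and in particular carries a Maltsev (i.e., $0$-pointed, $2$-cube) term. The quotient $(\m a/[1,1])^2$ is finitely generated as a quotient of $\m a^2$, so Theorem~\ref{pointed_polynomial_cor} applied with $k = 2$ gives $d_{\m a/[1,1]}(n) \in O(n)$. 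Lifting a generating set for $(\m a/[1,1])^n$ of size $O(n)$ coordinatewise through $\eta$ produces $G_2 \subseteq A^n$ of size $O(n)$ such that $\eta(G_2)$ generates $(\m a/[1,1])^n$; hence $G_2$ defeats every type $(\eta)$ maximal subuniverse. Then $G_1 \cup G_2$ generates $\m a^n$ and has size $O(\log n) + O(n) = O(n)$.

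The main obstacle is the affine-reduction step: one must verify that the abelian quotient $\m a/[1,1]$ genuinely carries a term Maltsev operation (not merely a polynomial one), so that Theorem~\ref{pointed_polynomial_cor} truly applies to it with $k = 2$. This should follow from congruence modularity of the generated variety together with the $0$-pointed cube term on $\m a$ descending to $\m a/[1,1]$. Once this is in place, assembling $G_1$ and $G_2$ through Theorem~\ref{induced} is routine.
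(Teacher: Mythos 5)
Your proof follows the paper's argument essentially verbatim: invoke Theorem~\ref{induced} to split maximal subuniverses into the projection type and the abelianization type, defeat the first with the logarithmic-size set from Corollary~\ref{surj_projections_cor_2}, defeat the second with a linear-size lift of a generating set for the affine quotient $(\m a/{[1,1]})^n$, and take the union. The one concern you flag---that $\m a/{[1,1]}$ carries a Maltsev \emph{term}, not merely a Maltsev polynomial---is resolved exactly as you anticipate: by \cite{bimmvw} the variety is congruence modular and hence has a Gumm difference term $p$, which on any abelian algebra satisfies $p(x,x,y)\approx y$ as well as $p(x,y,y)\approx x$ and so is a genuine Maltsev term; this is the content of \cite[Corollary~5.9]{freese-mckenzie} cited in the paper.
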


\begin{proof}
According to Theorem~\ref{pointed_polynomial_cor}, 
the fact that $\m a^k$ is finitely generated
implies that $\m a^n$ is finitely generated for all finite $n$.
Hence any proper subuniverse of $\m a^n$
is contained in a maximal subuniverse of $\m a^n$.

According to Theorem~\ref{induced}, 
if $M\leq \m a^n$ is a maximal subuniverse, then either
\begin{enumerate}
\item[($\pi$)] 
$M$ is induced by a projection $\pi_U\colon \m a^n\to \m a^U$
for some  subset $U\subseteq [n]$ satisfying $|U| < \max\{3,k\}$, or 
\item[($\eta$)]
$M$ is induced by $\eta\colon \m a^n\to (\m a/{[1,1]})^n$.
\end{enumerate}

For each $n$, choose a subset $G_{\pi}\subseteq A^n$ of size
$O(\log(n))$ %whose 
%projection onto any subset of $\max\{3,k\}$ coordinates
%is surjective. 
such that
the subalgebra $\langle G_{\pi}\rangle$ of
$\m a^n$ 
has the property that its projection onto any
$\max\{3,k\}$ coordinates of $\m a^n$ is surjective.
The existence of such a set
is guaranteed by Corollary~\ref{surj_projections_cor_2}.
Clearly $G_{\pi}$ is contained in no maximal subuniverse
of $\m a^n$ that is induced by a projection
onto any subset of less than $\max\{3,k\}$ coordinates.

The algebra $\m a/{[1,1]}$ is abelian and has a cube
term, so $\m a/{[1,1]}$ is affine by \cite[Corollary~5.9]{freese-mckenzie}.
According to the remarks following
Theorem~\ref{pointed_polynomial_cor}, 
$(\m a/{[1,1]})^n$ contains a set of generators
of size $O(n)$. For each $n$, choose a set 
$G_{\eta}\subseteq A^n$ of size $O(n)$
such that $\eta(G_{\eta})$ generates 
$(\m a/{[1,1]})^n$. Then $G_{\eta}$ is contained
in no maximal subuniverse of $\m a^n$ induced by $\eta$.

We now have that $G_{\pi}\cup G_{\eta}$ is a set
of size $O(n)$ that is contained
in no maximal subuniverse of $\m a^n$,
hence $G_{\pi}\cup G_{\eta}$ is a generating set for $\m a^n$
of size $O(n)$.

When $\m a$ is perfect, then $\m a^n$ has no 
maximal subuniverses induced by $\eta$,
so $G_{\pi}$ is a generating set for $\m a^n$
of size $O(\log(n))$.
\end{proof}

\begin{cor}
Suppose that $\m a$ is finite and has
a $0$-pointed, $k$-cube term.
If $\m a$ perfect, then 
$d_{\m a}(n)\in \Theta(\log(n))$.
If $\m a$ is imperfect, then
$d_{\m a}(n)\in \Theta(n)$.
\end{cor}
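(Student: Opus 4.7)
The plan is to deduce this corollary by matching the upper bounds supplied by Theorem~\ref{growth_cube_thm} against the lower bounds supplied by Theorem~\ref{first_bounds}, with essentially no extra work required.

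First I would observe that the hypotheses of Theorem~\ref{growth_cube_thm} are automatically available. Since $\m a$ is finite, $A^k$ is a finite set and is trivially a generating set of $\m a^k$, so $\m a^k$ is finitely generated. The theorem then delivers $d_{\m a}(n)\in O(\log(n))$ when $\m a$ is perfect and $d_{\m a}(n)\in O(n)$ when $\m a$ is imperfect. This handles the upper estimates in both cases.

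For the matching lower estimates I would appeal to Theorem~\ref{first_bounds}, after disposing of the trivial one-element case (where both conclusions hold vacuously, since $d_{\m a}(n)$ is eventually constant). Assuming $|A|>1$, Theorem~\ref{first_bounds}(1) yields $d_{\m a}(n)\in\Omega(\log(n))$ regardless of perfectness, which combined with $O(\log(n))$ gives $\Theta(\log(n))$ in the perfect case. In the imperfect case, Theorem~\ref{first_bounds}(2) strengthens this to $d_{\m a}(n)\in\Omega(n)$, and pairing this with the $O(n)$ upper bound gives $\Theta(n)$.

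The main ``obstacle'' is really just bookkeeping: verifying that ``finite'' suffices to trigger ``$\m a^k$ is finitely generated'' in the hypothesis of Theorem~\ref{growth_cube_thm}, and that no separate argument is needed for the lower bounds beyond quoting Theorem~\ref{first_bounds}. All the genuine work (the probabilistic construction in Section~\ref{solution}, the structure theory of critical maximal subuniverses in Sections~\ref{max_sec} and~\ref{parallel}, and the splitting of generators into a projective part $G_\pi$ and an abelianizing part $G_\eta$) has already been carried out to prove Theorem~\ref{growth_cube_thm}; the corollary is a clean finite-case repackaging of that theorem.
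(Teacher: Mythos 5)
Your proof is correct and matches the paper's exactly: combine the upper bounds of Theorem~\ref{growth_cube_thm} (noting that finiteness makes $\m a^k$ finitely generated) with the lower bounds of Theorem~\ref{first_bounds}. One small caveat: your parenthetical claim that the conclusions hold ``vacuously'' in the one-element case is not quite right---a one-element algebra is perfect yet has constant $d_{\m a}(n)\notin\Theta(\log n)$---but the paper's own one-line proof tacitly ignores this degenerate case as well, since Theorem~\ref{first_bounds} explicitly assumes more than one element.
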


\begin{proof}
Combine the upper bounds of
Theorem~\ref{growth_cube_thm}
with the lower bounds of Theorem~\ref{first_bounds}.
\end{proof}

\bibliographystyle{plain}

\end{document}